\title[On MZV relations in knot theory]
{On relations among multiple zeta values \\ obtained in knot theory}
\author{Hidekazu Furusho}
\address{Graduate School of Mathematics, Nagoya University, 
Chikusa-ku, Furo-cho, Nagoya, 464-8602,  Japan}
\email{furusho@math.nagoya-u.ac.jp}
\subjclass[2010]{Primary 11M32; Secondary 57M25}
\keywords{multiple zeta value; associator; knot invariant}
\date{May 3, 2020}
\newtheorem{thm}{Theorem}[section]
\newtheorem{lem}[thm]{Lemma}
\theoremstyle{remark}
\theoremstyle{definition}
\newtheorem{defn}[thm]{Definition}
\newtheorem{rem}[thm]{Remark}
\newtheorem{eg}[thm]{Example}       
\numberwithin{equation}{section}
\numberwithin{figure}{section}
\def\orientedcircle{\unitlength.2ex
 \begin{minipage}{8\unitlength}
   \begin{tikzpicture}
     \draw[->] (0,0.125) arc (90:450:0.125);
   \end{tikzpicture}
 \end{minipage}
}
\def\unorientedcircle{\unitlength.2ex
 \begin{minipage}{8\unitlength}
   \begin{tikzpicture}
     \draw (0,0.125) arc (90:450:0.125);
   \end{tikzpicture}
 \end{minipage}
}
\begin{document}
\bibliographystyle{amsalpha+}
\maketitle

\begin{abstract}
This paper focuses
linear and algebraic relations among multiple zeta values
which were obtained in knot theory.
It is shown that  they
can be derived from the associator relations,
i.e. the pentagon equation and the shuffle relation.
\end{abstract}


\setcounter{section}{-1}
\section{Introduction}

The {\it multiple zeta value} (MZV in short)
is the real number defined by the following power series
\begin{equation}\label{MZV}
\zeta({\bf k})
:=\sum_{0<n_1<\cdots<n_m}\frac{1}
{n_1^{k_1}\cdots n_m^{k_m}}
\end{equation}
for ${\bf k}=(k_1,\dots,k_m)$ with $m, k_1,\dots, k_m\in {\mathbb N} (={\mathbb Z}_{>0})$
which is {\it admissible}, i.e. $k_m>1$ (its convergent condition).
The MZV's were studied allegedly first by Euler \cite{E} for $m=1$ and $2$.
They have recently undergone a huge revival of interest due to their appearance in various branches of mathematics and physics. 
In connection with motive theory (\cite{A, Br, DG}),
linear and algebraic relations among MZV's are particularly important.

As far as the author knows, 
there are 4 relations among  MZV's which were obtained in knot theory:

(A) {\it Le-Murakami relation (1995)} 
in \cite{LM95} Lemma 3.2.1 and Theorem 3.3.1:\\
For any integer $N\geqslant 2$ (actually $N$ can be a variable),
$$
N\frac{\sinh h}{\sinh Nh}=
1+
\sum_{\substack{
                {{\bf k}:\text{ admissible}} 
                }}
\frac{(-1)^{{\rm dp}({\bf k})}(1-N^2)
(2N)^{{\rm wt}({\bf k})}}
{N^{2\cdot{\rm ht}({\bf k})}(2\pi\sqrt{-1})^{{\rm wt}({\bf k})}}
\zeta({\bf k})h^{{\rm wt}({\bf k})}
$$
holds in $\mathbb C[[h]]$.
Here ${\rm wt}({\bf k}):=k_1+\cdots+k_m$, ${\rm dp}({\bf k}):=m$
and ${\rm ht}({\bf k}):=\sharp\{i\bigm|k_i>1\}$.

(B) {\it Le-Murakami relation (1996)} in \cite{LM96} Theorem 4.6: \\
For any integer $N\geqslant 3$ (actually $N$ can be a variable),
\begin{align*}
\frac{N\sinh h}{\sinh(N-1)h+\sinh h}
=1+\sum_{{\bf k}:\text{ admissible}}
\frac{(-1)^{{\rm dp}({\bf k})}2^{{\rm wt}({\bf k})}g({\bf k})}
{(2\pi\sqrt{-1})^{{\rm wt}({\bf k})}}
\zeta({\bf k})h^{{\rm wt}({\bf k})}
\end{align*}
holds in $\mathbb C[[h]]$
where 
$$
g(k_1,\dots,k_m):=
(0,1,0)\cdot
uv^{k_1-1}uv^{k_2-1}\cdots uv^{k_m-1}\cdot
(1,1,N)^t\in {\mathbb Z}
$$
with
$$
u:=
\left(
\begin{array}{ccc}
N-1 & 1   & -1\\
1   & N-1 & -1\\
0   &   0 &  0\\
\end{array}
\right),\quad
v:=
\left(
\begin{array}{ccc}
N-1 &  -1 &  1\\
0   &   0 &  0\\
1   &  -1 & N-1\\
\end{array}
\right).
$$

(C) {\it Takamuki relation} in \cite{T} Proposition 5, 6 and Theorem 3:\\
For any integer $N\geqslant 2$ (actually $N$ can be a variable),
\begin{align*}
N^2&\frac{\sinh h}{\sinh Nh}\cdot\frac{\cosh Nh}{\cosh h}
=N+(N^2-1)N\cdot \\
&
\sum_{n=1}^\infty\Biggl\{
\sum_{k=1}^n
\sum_{({\bf p},{\bf q},{\bf r})\in I_{2n,k}}
 (-1)^{{{\rm wt}({\bf r})}}2^{2n-k} A({\bf p},{\bf q},{\bf r};N)
\binom{{\bf p}+{\bf r}}{{\bf r}}
\frac{\zeta\bigl(\tau({\bf p}+{\bf r},{\bf q})\bigr)}{(2\pi\sqrt{-1})^{2n}} + \\
&
\sum_{\substack{2\leqslant l,m\leqslant 2n-2 \\ l+m=2n}}
\sum_{i=1}^{[l/2]}\sum_{j=1}^{[m/2]} 
\sum_{\substack{({\bf p},{\bf q},{\bf r})\in I_{l,i}\\ ({\bf s},{\bf t},{\bf u})\in I_{m,j}}}
(-1)^{{\rm wt}({\bf u})+{\rm wt}({\bf r})+m}
2^{2n-i-j-1} B({\bf p},{\bf q},{\bf r},{\bf s},{\bf t},{\bf u};N)
\\
& 
\qquad \cdot\binom{{\bf p}+{\bf r}}{{\bf r}}
\binom{{\bf s}+{\bf u}}{{\bf u}}
\frac{\zeta\bigl(\tau({\bf p}+{\bf r},{\bf q})\bigr)\zeta\bigl(\tau({\bf s}+{\bf u},{\bf t})\bigr)}{(2\pi\sqrt{-1})^{2n}}
\Biggr\}h^{2n}
\end{align*}
holds in $\mathbb C[[h]]$. 
For ${\bf p}=(p_1,\dots,p_k)$, ${\bf q}=(q_1,\dots,q_k)\in \mathbb Z_{>0}^k$ 
we put
$$\binom{{\bf p}+{\bf q}}{\bf q}:=\prod_{i=1}^r \binom{p_i+q_i}{q_i}
\text{ and }
\tau({\bf p},{\bf q}):=({\bf 1}^{p_1-1},q_1+1,\dots,{\bf 1}^{p_k-1},q_k+1)
$$
where  ${\bf 1}^n$  means the index where $1$ repeats $n$-times.
We also put
 $$I_{n,k}:=\{({\bf p},{\bf q},{\bf r})\bigm|
{\bf p},{\bf q},{\bf r}\in {\mathbb Z}_{\geqslant 0}^k, 
{\rm wt}({\bf p}+{\bf q}+{\bf r})=n,
q_i\geqslant 1, p_i+r_i\geqslant 1, p_1\geqslant 1
\},$$
\begin{align*}
&A({\bf p},{\bf q},{\bf r};x):=x^{{\rm wt}({\bf q})-k}
\{(x-1)^{p_1}-(x+1)^{p_1}\}
\{(x-1)^{{\rm wt}({\bf r})}-(x+1)^{{\rm wt}({\bf r})}\} \\
&\qquad\qquad\qquad\qquad \cdot\prod_{a=2}^i\{(x-1)^{p_i+1}+(x+1)^{p_i+1}\},\\
&B({\bf p},{\bf q},{\bf r},{\bf s},{\bf t},{\bf u};x):=
\frac{A({\bf p},{\bf q},{\bf r};x)A({\bf s},{\bf t},{\bf u};x)
\{(x-1)^{{\rm wt}({\bf r+u})}-(x+1)^{{\rm wt}({\bf r+u})}\}}
{\{(x-1)^{{\rm wt}({\bf r})}-(x+1)^{{\rm wt}({\bf r})}\}\{(x-1)^{{\rm wt}({\bf u})}-(x+1)^{{\rm wt}({\bf u})}\}},
\end{align*}
for $({\bf p},{\bf q},{\bf r})\in I_{l,i}$ and
$({\bf s},{\bf t},{\bf u})\in I_{m,j}$.

(D) {\it Ihara-Takamuki relation} in \cite{IT} Theorem 2:
$$
\frac{7^2\cdot [6]_q[4]_q}{[12]_q[7]_q[2]_q}
=7+\sum_{\substack{{\bf p},{\bf q} \\  {\rm dp}({\bf p})={\rm dp}({\bf q})}}
\frac{(-1)^{{\rm wt}({\bf p})}w({\bf p},{\bf q})}{(2\pi\sqrt{-1})^{{\rm wt}({\bf p})+{\rm wt}({\bf q})}}
\zeta\bigl(\tau({\bf p},{\bf q})\bigr)h^{{\rm wt}({\bf p})+{\rm wt}({\bf q})}
$$
holds in $\mathbb C[[h]]$
where
$$[n]_q:=\frac{q^{n/2}-q^{-n/2}}{q^{1/2}-q^{-1/2}}$$
for an integer $n\geqslant 1$ with $q:=e^h$ and
$$
w({{\bf p},{\bf q}}):=
-(7,7,7,7)\cdot
x^{p_1}yx^{q_1}y\cdots x^{p_k}yx^{q_k}y\cdot
(27,7,14,0)^t \in {\mathbb Q}
$$
for ${\bf p}=(p_1,\dots,p_k)$, ${\bf q}=(q_1,\dots,q_k)\in \mathbb Z_{>0}^k$ 
with
$$
x:=
\left(
\begin{array}{rrrr}
-14 &  0  &  0  &  0\\
0   & -6  &  0  &  0\\
0   &  0  & -12 &  0\\
0   &  0  &  0  &  0\\
\end{array}
\right),\quad
y:=
\left(
\begin{array}{rrrr}
\frac{5}{14} & -\frac{9}{14}  & -\frac{9}{14}& \frac{27}{7}\\
-\frac{1}{6} & -\frac{1}{2} &  \frac{1}{2} & 1\\
-\frac{1}{3} &  1 & 0 & 2\\
\frac{1}{7} &\frac{1}{7} &\frac{1}{7} &\frac{1}{7} \\
\end{array}
\right).
$$
Noe that (C) is an algebraic relation.

The purpose of this paper is to clarify that 
the above 4 relations can be derived from the  associator relations,
namely from the pentagon relation and the shuffle relation
(see Definition \ref{associator} and Theorem \ref{pentagon-hexagons}).

\begin{thm}\label{main theorem}
Let $(\mu,\varphi)$ be any associator (see Definition \ref{associator} below).
Then the relations replacing $\zeta(k_1,\dots,k_m)$ with
$\zeta_{\varphi}(k_1,\dots,k_m)$ and
$2\pi\sqrt{-1}$ with $\mu$
in (A)-(D) hold.
\end{thm}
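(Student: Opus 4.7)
The plan is to re-examine each of the four derivations (A)--(D) and verify that they go through verbatim in the universal setting provided by an arbitrary Drinfeld associator $(\mu,\varphi)$. All four original proofs are essentially computations of the Kontsevich integral (or one of its Le-Murakami-Ohno variants) of a specific framed link: this universal invariant is constructed by cutting a tangle into elementary pieces and assigning $\varphi$ to each rebracketing and a formal $R$-matrix (whose parameter is $\mu$) to each crossing. The pentagon and shuffle relations are precisely the axioms that guarantee invariance of such a universal invariant under rebracketings and Reidemeister moves (cf.\ Theorem \ref{pentagon-hexagons}), so any identity deduced diagrammatically descends to an identity of noncommutative formal power series that holds for every associator.

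Following this principle, I would first set up the universal invariant $Z_\varphi$ of framed $q$-tangles and define $\zeta_\varphi(k_1,\dots,k_m)$ as the coefficient of the standard admissible word in the expansion of $\varphi(e_0,e_1)$ in noncommutative variables, normalized so that $\zeta_{\varphi_{KZ}}=\zeta$. For each relation I would then pinpoint the two isotopic tangle presentations whose $Z_\varphi$-evaluations produce the two sides of the identity: for (A) and (B), $N$-cables of the unknot in the parenthesizations used in \cite{LM95} and \cite{LM96}; for (C), the two factorizations underlying Takamuki's generating-function argument; for (D), the distinguished diagram of Ihara-Takamuki yielding the quantum ratio involving $[6]_q$, $[4]_q$, $[12]_q$, $[7]_q$, $[2]_q$. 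In each case the left-hand side (e.g.\ $N\sinh h/\sinh Nh$) arises as the image of the diagram under a weight system attached to a specific Lie algebra (typically $\mathfrak{sl}_2$) and is manifestly associator-independent, while the right-hand side arises from $Z_\varphi$ evaluated on an equivalent diagrammatic presentation.

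The main obstacle I anticipate is the Takamuki relation (C), whose proof in \cite{T} is an intricate combinatorial expansion of nested commutators of $e_0$ and $e_1$. One must verify that every step of Propositions 5 and 6 of \cite{T} invokes only the pentagon-and-shuffle identities of $\varphi$, and never a numerical MZV property such as Euler's formula $\zeta(2n)\in \mathbb{Q}\cdot\pi^{2n}$. A similar audit is required for (D): the matrices $x$ and $y$ must be shown to arise purely from the weight system applied to the underlying Ihara-Takamuki diagram, so that no KZ-specific constant is smuggled into their entries. Once this separation of the purely associator-theoretic content from the KZ-specific content is made explicit for each of (A)--(D), the substitutions $\zeta\mapsto\zeta_\varphi$ and $2\pi\sqrt{-1}\mapsto\mu$ become automatic and yield the theorem.
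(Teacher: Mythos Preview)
Your core strategy matches the paper's: the decisive input is the Le--Murakami theorem that the universal Kontsevich invariant built from any associator coincides with the one built from $\varPhi_{\mathrm{KZ}}$ (this is the paper's Lemma~\ref{key lemma}), so any identity obtained by applying a weight system to $\bar\gamma$ transfers automatically to $\bar\gamma_{(\mu,\varphi)}$.

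A few details in your plan are off and worth correcting. The $N$-dependence in (A) and (B) does \emph{not} come from cabling the unknot; the diagram is always the single U-knot of Figure~\ref{U-knot}, and $N$ enters only through the weight system --- the fundamental representation of $\mathfrak{sl}(N)$ for (A) and of $\mathfrak{so}(N)$ for (B), not $\mathfrak{sl}_2$. Likewise (D) uses the $7$-dimensional standard representation of $G_2$. These weight systems are linear maps on $\mathcal{A}_0(S^1)$ defined independently of any associator, so once Lemma~\ref{key lemma} is in hand there is literally nothing to check on the left-hand sides.

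For (C) and (D) your instinct that a line-by-line audit of Takamuki's and Ihara--Takamuki's computations is needed is in the right direction but vaguer than necessary. The paper isolates exactly which properties of $\varphi$ those computations use beyond the associator-independence of $\bar\gamma$: a regularization formula (Lemma~\ref{Le-Murakami method}), which follows from the group-like condition alone, and the duality relation $\zeta_\varphi(\tau(\mathbf{p},\mathbf{q}))=\zeta_\varphi(\tau(\mathbf{q}^*,\mathbf{p}^*))$ (Lemma~\ref{lem:duality}), which follows from the group-like condition together with the $2$-cycle relation $\varphi(X_0,X_1)\varphi(X_1,X_0)=1$. Since the $2$-cycle relation is itself a consequence of shuffle plus pentagon, these two lemmas suffice, and no ``numerical MZV property such as Euler's formula'' ever enters. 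Replacing your proposed audit by these two short lemmas makes the argument both shorter and sharper.
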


Here for  each series $\varphi\in \mathbb C\langle\langle X_0,X_1\rangle\rangle$
we denote the coefficient of 
$X_0^{k_m-1}X_1\cdots X_0^{k_1-1}X_1$
($m$, $k_1$,\dots, $k_m\in {\mathbb N}$ and $k_m>1$)
in $\varphi$
multiplied with $(-1)^m$ by 
$$\zeta_{\varphi}(k_1,\dots,k_m)\in\mathbb C.$$

The contents of this paper go as follows:
\S \ref{sec:associators} is a review of the formalism of
associators and the Grothendieck-Teichm\"{u}ller group.
In \S \ref{sec:lemmas} we show auxiliary lemmas to prove
Theorem \ref{main theorem} in  
\S \ref{sec:proof of main theorem}.

\section{Associators and the Grothendieck-Teichm\"{u}ller group} \label{sec:associators}
Denote by
$U\frak F_2=\mathbb C\langle\langle X_0,X_1\rangle\rangle$
the non-commutative formal power series ring, which is regarded as
the completion of
the universal enveloping algebra of the  free Lie algebra
$\frak F_2$ with two variables $X_0$ and $X_1$.
It is equipped with a structure of Hopf algebra
whose coproduct
$\Delta: U\frak F_2\to U\frak F_2\hat\otimes U\frak F_2$
is given by 
$$
\Delta(X_0)=X_0\otimes 1+1\otimes X_0 \text{ and }
\Delta(X_1)=X_1\otimes 1+1\otimes X_1,
$$
whose unit
$\epsilon: U\frak F_2\to U\frak F_2\hat\otimes U\frak F_2$
is given by 
$$
\epsilon(X_0)=0 \text{ and }
\epsilon(X_1)=0,
$$
and
whose antipode 
$S:U\frak F_2\to U\frak F_2$ is the {\it anti}-automorphism
such that
$$
S(X_0)=-X_0 \text{ and } S(X_1)=-X_1.
$$
For any algebra homomorphism $\iota:U\frak F_2\to S$,
the image $\iota(\varphi)\in S$ is denoted 
by $\varphi(\iota(X_0),\iota(X_1))$.

\begin{defn}[\cite{Dr}]\label{associator}
A pair $(\mu,\varphi)$ with a {\it non-zero} element $\mu$ in $\mathbb C$ and a series 
$\varphi=\varphi(X_0,X_1)\in U\frak F_2$ is called an {\it associator}
if it satisfies the {\it associator relations}  \eqref{shuffle}--\eqref{hexagon-b},
that is, 
the {\it shuffle product} 
\footnote{See its equivalent formulation \eqref{shuffle2}.
It is also known as the group-like condition because it is equivalent to
saying $\varphi\in\exp\hat{\frak F}_2$.
}
\begin{equation}\label{shuffle}
\Delta (\varphi)=\varphi\otimes \varphi
\text{ and } 
\varphi(0,0)=1,
\end{equation}
{\it one pentagon equation} 
\begin{equation}\label{pentagon}
\varphi(t_{12},t_{23}+t_{24})
\varphi(t_{13}+t_{23},t_{34})=
\varphi(t_{23},t_{34})
\varphi(t_{12}+t_{13},t_{24}+t_{34})
\varphi(t_{12},t_{23})
\end{equation}
in $U\frak a_4$
and
{\it two hexagon equations}
\begin{equation}\label{hexagon}
\exp\{\frac{\mu (t_{13}+t_{23})}{2}\}=
\varphi(t_{13},t_{12})\exp\{\frac{\mu t_{13}}{2}\}
\varphi(t_{13},t_{23})^{-1}
\exp\{\frac{\mu t_{23}}{2}\} \varphi(t_{12},t_{23}),
\end{equation}
\begin{equation}\label{hexagon-b}
\exp\{\frac{\mu (t_{12}+t_{13})}{2}\}=
\varphi(t_{23},t_{13})^{-1}\exp\{\frac{\mu t_{13}}{2}\}
\varphi(t_{12},t_{13})
\exp\{\frac{\mu t_{12}}{2}\} \varphi(t_{12},t_{23})^{-1}
\end{equation}
in $U\frak a_3$
where we denote by $U\frak a_3$ (resp. $U\frak a_4$) 
the completion of the universal enveloping algebra of
the {\it pure braid Lie algebra} $\frak a_3$ (resp. $\frak a_4$)
with 3 (resp. 4) strings,
which is generated by $t_{ij}$ ($1\leqslant i,j \leqslant 3$ (resp. $4$))
with defining relations
$$t_{ii}=0, \  t_{ij}=t_{ji}, \ [t_{ij},t_{ik}+t_{jk}]=0
\text{ ($i$, $j$, $k$: all distinct)}$$
$$\text{and  }\ [t_{ij},t_{kl}]=0
\text{ ($i$, $j$, $k$, $l$: all distinct).}$$
\end{defn}


The equations \eqref{pentagon}--\eqref{hexagon-b} reflect the three axioms of 
braided monoidal categories 
\cite{JS}.
%
Associators are essential for construction of quasi-triangular quasi-Hopf
quantized universal enveloping algebras (cf. \cite{Dr})
and also for a 
reconstruction of
universal Vassiliev knot invariant (the Kontsevich invariant \cite{Ko,Bar95})
in a combinatorial way (cf.
Le and Murakami \cite{LM96b}, Bar-Natan \cite{BN}, 
Kassel and Turaev \cite{KT}).

Drinfeld \cite{Dr} proved that such a pair  $(\mu,\varphi)$ always exists for any 
given $\mu$. 
Note that 
$\left(1,\varphi(\frac{X_0}{\mu},\frac{X_1}{\mu})\right)$ is an associator
if and only if $(\mu,\varphi)$ is so.
Any asociator satisfies the so-called
{\it 2-cycle relation} (cf. \cite{Dr}).
\begin{equation}\label{2-cycle}
\varphi(X_0,X_1)\varphi(X_1,X_0)=1,
\end{equation}
which is a consequence of \eqref{shuffle} and \eqref{pentagon}
(cf. \cite{F10}).
Actually the two hexagon equations are a consequence of
the one pentagon equation:

\begin{thm}[\cite{F10}]\label{pentagon-hexagons}
Let $\varphi=\varphi(X_0,X_1)$ be 
an element of $U\frak F_2$ satisfying \eqref{shuffle} and
\eqref{pentagon}.
Then there always exists $\mu\in\mathbb C$ (unique up to signature)
such that the pair $(\mu,\varphi)$ satisfies
\eqref{hexagon} and \eqref{hexagon-b}.
\end{thm}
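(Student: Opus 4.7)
The plan is to extract $\mu$ from the weight-$2$ part of $\varphi$ and then verify both hexagon equations by leveraging the pentagon. Since $\varphi$ satisfies the shuffle relation \eqref{shuffle}, it is group-like, so $\psi:=\log\varphi$ lies in the completed free Lie algebra $\widehat{\mathfrak F}_2$. The weight-$2$ component of $\mathfrak F_2$ is one-dimensional, spanned by $[X_0,X_1]$, so we may write $\psi_2=c\cdot[X_0,X_1]$ for some scalar $c\in\mathbb C$. A direct expansion of \eqref{hexagon} modulo degree $3$ in $U\mathfrak a_3$ forces a relation of the form $\mu^2=\kappa\cdot c$ for a universal nonzero constant $\kappa$: the coefficients of $t_{ij}t_{kl}$ on the two sides of \eqref{hexagon} match precisely when this identity holds. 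Thus $\mu$ is determined uniquely up to sign, which gives the uniqueness asserted in the theorem.

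First I would verify consistency at low orders and reduce the problem to a single hexagon. Both \eqref{hexagon} and \eqref{hexagon-b} hold modulo degree $3$ for the chosen $\mu$, and the $2$-cycle relation \eqref{2-cycle}, itself a consequence of \eqref{shuffle} and \eqref{pentagon}, guarantees that $\mu\mapsto-\mu$ interchanges the two hexagons via $X_0\leftrightarrow X_1$. In particular, \eqref{hexagon-b} will follow formally from \eqref{hexagon} by inverting and applying the $2$-cycle identity, so it suffices to prove \eqref{hexagon} for our chosen $\mu$.

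The heart of the argument is to show that the defect
\begin{equation*}
H_\mu:=\exp\{-\tfrac{\mu(t_{13}+t_{23})}{2}\}\,\varphi(t_{13},t_{12})\exp\{\tfrac{\mu t_{13}}{2}\}\,\varphi(t_{13},t_{23})^{-1}\exp\{\tfrac{\mu t_{23}}{2}\}\,\varphi(t_{12},t_{23})
\end{equation*}
in $U\mathfrak a_3$ is equal to $1$. I would proceed by induction on the total weight. Suppose $H_\mu\equiv 1$ modulo degree $n$, and write $H_\mu=1+h_n+\text{(higher terms)}$ with $h_n$ homogeneous of weight $n$. Substituting this Ansatz into the pentagon equation \eqref{pentagon} in $U\mathfrak a_4$, together with the assumed hexagon modulo degree $n$, produces a system of linear constraints on $h_n$ by comparing coefficients in the free associative algebra. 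The pentagon, being a five-term Stasheff-type relation, supplies enough independent equations to pin $h_n$ down to the one-dimensional space already parametrized by the adjustment of the scalar $\mu$; since $\mu$ has been fixed by the weight-$2$ analysis, this forces $h_n=0$, closing the induction.

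The main obstacle is precisely the last step: to show that, modulo the deformations already absorbed into the choice of $\mu$, the pentagon in $U\mathfrak a_4$ kills every prospective hexagon defect in $U\mathfrak a_3$ in each weight. This is a cohomological statement, essentially computing the relevant $H^1$ of a cochain complex attached to the infinitesimal pure-braid Lie algebras $\mathfrak a_n$, and showing it is one-dimensional and spanned by the class corresponding to $\mu$. Carrying this out requires the structural input from the Grothendieck--Teichm\"uller Lie algebra $\mathfrak{grt}_1$ as developed in \cite{F10}; once that input is in hand the induction closes, and both hexagon equations are established.
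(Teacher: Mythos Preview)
The present paper does not prove this theorem: it is quoted from \cite{F10}, with alternative proofs recorded in \cite{AT, BD, W}, so there is no ``paper's own proof'' here against which to compare.

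On the proposal itself: your outline is broadly in the spirit of \cite{F10}---extract $\mu$ from the weight-$2$ coefficient of $\log\varphi$ and then argue degree by degree that the hexagon defect vanishes. But the proposal is not a proof. The reduction of \eqref{hexagon-b} to \eqref{hexagon} is misstated: the two hexagons are not interchanged simply by $\mu\mapsto-\mu$ together with $X_0\leftrightarrow X_1$; the correct reduction, assuming the $2$-cycle relation \eqref{2-cycle}, goes through a permutation of the three strand labels in $\mathfrak a_3$ together with an inversion. More importantly, the inductive step is the entire substance of the theorem, and you explicitly defer it (``Carrying this out requires the structural input from the Grothendieck--Teichm\"uller Lie algebra $\mathfrak{grt}_1$ as developed in \cite{F10}''). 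Without actually supplying that argument---the Lie-algebraic computation showing that the pentagon forces the hexagon defect to vanish in each degree---your sketch is an outline pointing at the literature rather than a self-contained proof.
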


Several different proofs of the above theorem were obtained
in \cite{AT, BD, W}.
For various aspects of associators, consult \cite{F14}.

The Grothendieck-Teichm\"{u}ller group was introduced by Drinfeld \cite{Dr}
in his study of deformations of quasi-triangular quasi-Hopf
quantized universal enveloping algebras.
It was defined to be the set of degenerated associators.
The construction of the group was also stimulated  by the previous idea of Grothendieck,
{\it un jeu de Teichm\"{u}ller-Lego},
which was posed in his research proposal
{\it Esquisse d'un programme} \cite{G}.

\begin{defn}[\cite{Dr}]
A {\it degenerated associator}  is a group-like series $\varphi\in U\frak F_2$ 
satisfying the defining equations \eqref{pentagon}--\eqref{hexagon-b}
of associators with $\mu=0$.
The 
{\it Grothendieck-Teichm\"{u}ller} 
{\it group}
\footnote{
It is denoted by $GRT_1$ because it is a {\it unipotent} part of 
the {\it graded} version of the
Grothendieck-Teichm\"{u}ller group. 
}
$GRT_1(\mathbb C)$
is defined 
to be 
the set of 
degenerated associators.
\end{defn}

%
It indeed forms a 
group \cite{Dr}
by the multiplication
\begin{equation}\label{multiplication}
\varphi_1\circ\varphi_2:
=\varphi_2\cdot\varphi_1(X_0,\varphi^{-1}_2X_1\varphi_2).
\end{equation}
It was also shown that 
the set of associators with each fixed $\mu\in\mathbb C^\times$
forms a right $GRT_1(\mathbb C)$-torsor by \eqref{multiplication}.

%
%

\begin{eg}[\cite{Dr}]
By using the KZ (Knizhnik-Zamolodchikov) equation, Drinfeld constructed
a series $\varPhi_{\mathrm{KZ}}=\varPhi_{\mathrm{KZ}}(X_0,X_1)\in U\frak F_2$
called the {\it KZ associator} (a.k.a Drinfeld associator)
and he showed that the pair $(2\pi\sqrt{-1},\varPhi_{\mathrm{KZ}})$
satisfies the associator relations \eqref{shuffle}--\eqref{hexagon-b} in \cite{Dr}.

One of the most important properties of  $\varPhi_{\mathrm{KZ}}$ is that 
MZV's appear as its coefficients:
\begin{equation}\label{KZ=zeta}
\zeta_{\varPhi_{\mathrm{KZ}}}(k_1,\cdots,k_m)= \zeta(k_1,\cdots,k_m)
\end{equation}
for $k_m>1$.
Actually general coefficients of $\varPhi_{\mathrm{KZ}}$ are calculated 
to be linear combinations of MZV's
by the formula \eqref{regularization} below.
\footnote{An essentially same formula appeared in \cite{LM96} Theorem A.9
though it  seems to include an error on the signature
which was corrected in \cite{F03} Proposition 3.2.3.
}

Since $\varPhi_{\mathrm{KZ}}$ satisfies the associator relations
and MZV's appear as its coefficients,
lots of algebraic relations among MZV's are obtained.
\end{eg}

\section{Auxiliary lemmas} \label{sec:lemmas}
We denote $\shuffle: U\frak F_2\hat\otimes U\frak F_2 \to U\frak F_2$
to be a {\it shuffle product} of $U\frak F_2$,
which is an associative and commutative product 
recursively defined by
$W\shuffle 1=1\shuffle W=W$ and
$$UW\shuffle VW'=U(W\shuffle VW')+V(UW\shuffle W') \text{ with } U,V\in\{A,B\}.$$
for any word (a monic monomial element in  $U\frak F_2$)
$W$ and $W'$.
It can be identified with the dual of the coproduct $\Delta:U\frak F_2\to U\frak F_2\hat\otimes U\frak F_2$
by the identification $U\frak F_2^*\simeq U\frak F_2$.
For each series $\varphi\in U\frak F_2$, we
denote the linear map sending each word 
to
its coefficient 
in $\varphi$ by 
$I_\varphi:U\frak F_2\to \mathbb C$,
whence 
$$
I_\varphi(X_0^{k_m-1}X_1\cdots X_0^{k_1-1}X_1)=(-1)^m  \zeta_\varphi(k_1,\cdots,k_m)
$$
for $k_m>1$.
We note that \eqref{shuffle} for $\varphi$ is equivalent to
\begin{equation}\label{shuffle2}
I_\varphi(W)\cdot I_\varphi(W')=I_\varphi(W\shuffle W')
\text{ and } I_\varphi(1)=1
\end{equation}
for any $W$ and $W'\in U\frak F_2$.

\begin{lem}
\label{Le-Murakami method}
Let $\varphi\in U\frak F_2$ be group-like without linear terms
i.e. a series satisfying \eqref{shuffle} and $I_\varphi(X_0)=I_\varphi(X_1)=0$.
Suppose that $W$ is written as $X_1^rVX_0^s$ ($r,s\geqslant 0$, 
$V\in X_0 \cdot U\frak F_2\cdot X_1$ or $V=1$).
Then
\begin{equation}\label{regularization}
I_\varphi(W)=\sum_{\substack{0\leqslant a\leqslant r\\ 0\leqslant b\leqslant s}}(-1)^{a+b}
I_\varphi\Bigl(\pi(X_1^a\shuffle X_1^{r-a}VX_0^{s-b}\shuffle X_0^b)\Bigr).
\end{equation}
Here $\pi:U\frak F_2\to U\frak F_2$ is
the natural projection 
$U\frak F_2\to {\mathbb C}+X_0 \cdot U\frak F_2\cdot X_1
(\subset U\frak F_2)$
annihilating $X_1\cdot U\frak F_2$ and $U\frak F_2\cdot X_0$.
\end{lem}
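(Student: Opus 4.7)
The hypotheses that $\varphi$ is group-like and has vanishing linear coefficients make $I_\varphi:(U\frak F_2,\sha)\to\mathbb C$ a character with $I_\varphi(X_0)=I_\varphi(X_1)=0$; combined with the identity $n!\,X_i^n=X_i^{\sha n}$ in the shuffle algebra, this forces $I_\varphi(X_0^n)=I_\varphi(X_1^n)=0$ for every $n\ge 1$. Writing $Z_{a,b}:=X_1^{r-a}VX_0^{s-b}$ and invoking shuffle-multiplicativity \eqref{shuffle2}, the summands
\[
I_\varphi(X_1^a\sha Z_{a,b}\sha X_0^b)=I_\varphi(X_1^a)\,I_\varphi(Z_{a,b})\,I_\varphi(X_0^b)
\]
vanish whenever $(a,b)\ne(0,0)$, so the alternating sum \emph{without} the projection already collapses to
\[
\sum_{a=0}^r\sum_{b=0}^s(-1)^{a+b}\,I_\varphi(X_1^a\sha Z_{a,b}\sha X_0^b)=I_\varphi(W).
\]
Using $\pi=\mathrm{id}-(1-\pi)$, the identity \eqref{regularization} is therefore equivalent to
\[
(\star)\qquad\sum_{a,b}(-1)^{a+b}\,I_\varphi\bigl((1-\pi)(X_1^a\sha Z_{a,b}\sha X_0^b)\bigr)=0.
\]

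To establish $(\star)$ I would decompose $1-\pi=(1-\pi_L)+(1-\pi_R)-(1-\pi_L)(1-\pi_R)$ by inclusion-exclusion, where $\pi_L$ (resp.\ $\pi_R$) annihilates words beginning with $X_1$ (resp.\ ending in $X_0$), and show each of the three resulting alternating sums vanishes \emph{as an element of $U\frak F_2$}. Applying the recursive shuffle identity $UW\sha VW'=U(W\sha VW')+V(UW\sha W')$ to isolate the leftmost letter of $X_1^a\sha Z_{a,b}\sha X_0^b$, and observing that by the hypothesis $V\in X_0\cdot U\frak F_2\cdot X_1$ or $V=1$ the factor $Z_{a,b}$ starts with $X_1$ precisely when $r-a\ge 1$, one obtains the factorization
\[
(1-\pi_L)\bigl(X_1^a\sha Z_{a,b}\sha X_0^b\bigr)=X_1\cdot\Bigl([a\ge 1]\bigl(X_1^{a-1}\sha Z_{a,b}\sha X_0^b\bigr)+[r-a\ge 1]\bigl(X_1^a\sha Z_{a+1,b}\sha X_0^b\bigr)\Bigr).
\]
Under $\sum_{a=0}^r(-1)^a$ the two bracketed terms cancel via the index shift $a\mapsto a+1$, giving $0$ for each fixed $b$; summing over $b$ with sign $(-1)^b$ yields $\sum_{a,b}(-1)^{a+b}(1-\pi_L)(X_1^a\sha Z_{a,b}\sha X_0^b)=0$ in $U\frak F_2$. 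A symmetric argument isolating the rightmost letter handles $(1-\pi_R)$, and the intersection $(1-\pi_L)(1-\pi_R)$ then vanishes by applying the operator $(1-\pi_R)$ (which commutes with $(1-\pi_L)$) to the already-established zero $(1-\pi_L)$-sum.

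The main technical step is justifying the left-factorization above. One must enumerate the three possible suppliers of the first letter of a word in the shuffle $X_1^a\sha Z_{a,b}\sha X_0^b$, note that the $X_0^b$-factor always supplies an $X_0$-initial word (hence nothing to $(1-\pi_L)$), that the $X_1^a$-factor supplies an $X_1$-initial letter only when $a\ge 1$, and that the $Z_{a,b}$-factor does so only when its prefix $X_1^{r-a}$ is nontrivial. A subsidiary subtlety lies in the boundary cases $a\in\{0,r\}$ and $b\in\{0,s\}$ where one of the two telescoping terms drops out; these edge contributions are precisely what allows the index shift $a\mapsto a+1$ to cancel without remainder. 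Once this combinatorial bookkeeping is complete, each alternating sum becomes a shuffle-algebraic avatar of the binomial identity $(1-1)^n=0$, and $(\star)$ follows.
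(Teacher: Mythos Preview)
Your argument is correct, and it proceeds along a genuinely different route from the paper's. The paper introduces auxiliary commutative variables $x_0,x_1$ and the algebra map $g_1:X_i\mapsto X_i-x_i$ together with a linear ``reassembly'' map $g_2:W\otimes x_0^px_1^q\mapsto X_1^qWX_0^p$; the key steps are the Hopf-algebraic identity $\varphi(X_0-x_0,X_1-x_1)=\varphi$ (which follows from group-likeness plus vanishing linear part via the abelianization) and the observation $g_2\circ g_1\circ\pi=g_2\circ g_1$, after which \eqref{regularization} falls out by comparing coefficients of $W$ in $\varphi=(g_2\circ g_1\circ\pi)(\varphi)$. Your proof instead stays entirely inside the shuffle algebra: you first collapse the un-projected alternating sum using the character property, then kill the complementary $(1-\pi)$-part by a direct telescoping argument on the left/right boundary projections. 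The paper's approach is more conceptual and packages the whole regularization into a single substitution trick, while yours is more elementary and makes the cancellation mechanism (essentially $(1-1)^n=0$ in the shuffle algebra) completely explicit; your decomposition $1-\pi=(1-\pi_L)+(1-\pi_R)-(1-\pi_L)(1-\pi_R)$ and the first-letter factorization are the combinatorial shadow of the paper's $g_2\circ g_1$ identity.
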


\begin{proof}
Let $x$ and $y$ be commutative variables and consider 
$$\mathbb C\langle\langle X_0,X_1\rangle\rangle [[x_0,x_1]]:=
\mathbb C\langle\langle X_0,X_1\rangle\rangle \widehat{\otimes}
\mathbb C[[x_0,x_1]].$$
Let
$$g_1:\mathbb C\langle\langle X_0,X_1\rangle\rangle \to
\mathbb C\langle\langle X_0,X_1\rangle\rangle [[x_0,x_1]]$$
be the algebra homomorphism sending $X_0$, $X_1$ to 
$$X_0-x_0:=X_0\otimes 1- 1\otimes x_0, \quad
X_1-x_1:=X_1\otimes 1- 1\otimes x_1$$
respectively and
let 
$$g_2:\mathbb C\langle\langle X_0,X_1\rangle\rangle [[x_0,x_1]]\to
\mathbb C\langle\langle X_0,X_1\rangle\rangle $$
be the well-defined $\mathbb C$-linear map sending $W\otimes x_0^px_1^q$
to $X_1^qWX_0^p$ for each word $W$ and $p,q\geqslant 0$.
Then by definition
$$g_2\circ g_1(VX_0)=g_2\circ g_1(X_1V)=0$$ 
for any 
$V\in \mathbb C\langle\langle X_0,X_1\rangle\rangle$.
So we have
\begin{equation*}
g_2\circ g_1\circ \pi=g_2\circ g_1
\end{equation*}
(cf. the arguments in \cite{F04} Lemmas 3.32-3.38. and in \cite{LM96} Appendix A.)

By \eqref{shuffle} and our assumption, we have
$$\varphi(X_0-x_0,X_1-x_1)=\varphi(X_0,X_1)$$
in
$\mathbb C\langle\langle X_0,X_1\rangle\rangle [[x_0,x_1]]$.
Namely 
\begin{equation*}
\varphi=(g_2\circ g_1)(\varphi).
\end{equation*}
By combining the above two equations, we get
$$
\varphi=(g_2\circ g_1\circ \pi)(\varphi).
$$
By comparing its coefficient of $W=X_1^rVX_0^s$, 
we obtain \eqref{regularization}.
\end{proof}

The following lemma will be used in the proof of our main theorem.

\begin{lem}\label{lem:duality}
Assume that $\varphi\in U\frak F_2$ satisfies \eqref{shuffle} and \eqref{2-cycle}.
Then the duality relation
\begin{equation}\label{duality}
\zeta_\varphi\bigl(\tau({\bf p},{\bf q})\bigr)=
\zeta_\varphi\bigl(\tau({\bf q^*},{\bf p^*})\bigr)
\end{equation} 
holds for any
${\bf p}=(p_1,\dots,p_k)$, ${\bf q}=(q_1,\dots,q_k)\in \mathbb Z_{>0}^k$ 
where ${\bf r^*}:=(r_k,\dots,r_1)$ for ${\bf r}=(r_1,\dots,r_k)$.
\end{lem}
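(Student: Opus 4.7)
The plan is to translate the duality on indices into a statement about the coefficients of $\varphi$ on its associated word, and then derive this coefficient-level identity from the two-cycle relation together with the Hopf-algebra structure.

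First I would make the word correspondence explicit. The index $\tau({\bf p},{\bf q})=({\bf 1}^{p_1-1},q_1+1,\dots,{\bf 1}^{p_k-1},q_k+1)$ has depth $p_1+\cdots+p_k$, and under the convention $\zeta_\varphi(k_1,\dots,k_m)=(-1)^m I_\varphi(X_0^{k_m-1}X_1\cdots X_0^{k_1-1}X_1)$ one checks directly that its corresponding word is $W({\bf p},{\bf q})=X_0^{q_k}X_1^{p_k}X_0^{q_{k-1}}X_1^{p_{k-1}}\cdots X_0^{q_1}X_1^{p_1}$. Similarly $\tau({\bf q}^*,{\bf p}^*)$ has depth $q_1+\cdots+q_k$ and corresponds to the word $W'=X_0^{p_1}X_1^{q_1}\cdots X_0^{p_k}X_1^{q_k}$. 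A short computation shows $W'=\sigma(W^{\mathrm{rev}})$, where $\sigma$ is the $\mathbb{C}$-algebra involution swapping $X_0\leftrightarrow X_1$ and $W^{\mathrm{rev}}$ denotes word reversal. After unwinding the signs $(-1)^{{\rm dp}}$ from the definition of $\zeta_\varphi$, the desired duality is equivalent to
\begin{equation*}
I_\varphi(W)=(-1)^{|W|}\,I_\varphi(\sigma(W^{\mathrm{rev}}))
\end{equation*}
for $W=W({\bf p},{\bf q})$, where $|W|$ is the length of $W$.

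Next I would prove this coefficient identity for every word $W\in U\mathfrak F_2$. Since $\varphi$ is group-like by \eqref{shuffle}, the standard Hopf-algebra identity $m\circ(\mathrm{id}\otimes S)\circ\Delta(\varphi)=\epsilon(\varphi)=1$ forces $S(\varphi)=\varphi^{-1}$. On the other hand, the two-cycle relation \eqref{2-cycle} reads $\varphi(X_1,X_0)=\varphi(X_0,X_1)^{-1}$, so $\varphi(X_1,X_0)=S(\varphi(X_0,X_1))$. Using that $S$ is the anti-automorphism with $S(X_i)=-X_i$, one has $S(W)=(-1)^{|W|}W^{\mathrm{rev}}$ for any word $W$. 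Reading off coefficients of the identity $\varphi(X_1,X_0)=S(\varphi(X_0,X_1))$ then gives precisely $I_\varphi(\sigma(W))=(-1)^{|W|}I_\varphi(W^{\mathrm{rev}})$ for every $W$, which rearranges to the display above.

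Finally I would match signs to close the argument: the weights $|W|=|W'|={\rm wt}(\tau({\bf p},{\bf q}))=\sum_i(p_i+q_i)$, while the depth difference ${\rm dp}(\tau({\bf p},{\bf q}))-{\rm dp}(\tau({\bf q}^*,{\bf p}^*))=\sum p_i-\sum q_i\equiv\sum(p_i+q_i)\pmod{2}$, so the $(-1)^{\mathrm{dp}}$ factors in the definition of $\zeta_\varphi$ absorb the $(-1)^{|W|}$ exactly. The main obstacle is purely bookkeeping: correctly identifying the word attached to $\tau({\bf p},{\bf q})$ (in particular keeping track of the order-reversal in the convention for $\zeta_\varphi$) and verifying that the composition of reversal and $X_0\!\leftrightarrow\!X_1$ really implements the duality $\tau({\bf p},{\bf q})\leftrightarrow\tau({\bf q}^*,{\bf p}^*)$ on the level of words; once that is set up, the Hopf/antipode input from $\eqref{shuffle}$ and $\eqref{2-cycle}$ is immediate.
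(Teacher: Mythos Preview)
Your argument is correct and is exactly the approach the paper takes: the paper also identifies $\zeta_\varphi(\tau({\bf p},{\bf q}))=(-1)^{{\rm wt}({\bf p})}I_\varphi(X_0^{q_k}X_1^{p_k}\cdots X_0^{q_1}X_1^{p_1})$, invokes $S(\varphi)=\varphi^{-1}$ from group-likeness, and combines it with the $2$-cycle relation. You have simply made explicit the word/sign bookkeeping that the paper leaves to the reader.
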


\begin{proof}
By defninitoin,
$$\zeta_\varphi\bigl(\tau({\bf p},{\bf q})\bigr)
=(-1)^{{\rm wt}({\bf p})}I_\varphi(X_0^{q_k}X_1^{p_k}\cdots X_0^{q_1}X_1^{p_1}).
$$
By the fundamental property of universal enveloping algebra,
$S(x)=-x$ for $x\in\frak F_2$, hence
$$S(\varphi)=\varphi^{-1}.$$
Then by combining it with \eqref{2-cycle}, we obtain \eqref{duality}.
\end{proof}

%

\section{Proof of Theorem \ref{main theorem}}\label{sec:proof of main theorem}
Let $X$ be a $1$-dimensional compact oriented real manifold 
with ordered connected components.
Denote ${\mathcal A}(X)$ to be a $\mathbb C$-vector space of chord diagrams there.
A finite number (which yields a degree in ${\mathcal A}(X)$)
of unordered pairs of distinct interior points on $X$
regarded up to  orientation and component preserving homeomorphisms
subject to 4T-relation.
We denote its completion with respect to the degree  by the same symbol
${\mathcal A}(X)$.
Put ${\mathcal A}_0(X)$ to be its further quotient by FI-relation. 
Here the  {\it  4T-relation} stands for the 4 terms relation
defined by 
$$
D_1-D_2+D_3-D_4=0
$$
where $D_j$ are chord diagrams with four chords
identical outside a ball in which they differ as illustrated in Figure \ref{4T-relation}.
\begin{figure}[h]
\begin{center}
            \begin{tikzpicture}
                  \draw[->] (0,0)--(0,1.5) ;
                  \draw[->] (0.5,0)--(0.5,1.5) ;
                  \draw[->] (1,0)--(1,1.5) ;
                   \draw[densely dotted] (0,0.5)--(1,0.5);
                   \draw[densely dotted] (0,1)--(0.5,1);
                   \draw(0.5,-0.5) node{$D_1$};
                   \draw(1.5,0.7) node{$-$};
         \begin{scope}[xshift=2cm]
                  \draw[->] (0,0)--(0,1.5) ;
                  \draw[->] (0.5,0)--(0.5,1.5) ;
                  \draw[->] (1,0)--(1,1.5) ;
                   \draw[densely dotted] (0,0.5)--(0.5,0.5);
                   \draw[densely dotted] (0,1)--(1,1) ;
                   \draw(0.5,-0.5) node{$D_2$};
                   \draw(1.5,0.7) node{$+$};
         \end{scope}                   
         \begin{scope}[xshift=4cm]
                  \draw[->] (0,0)--(0,1.5) ;
                  \draw[->] (0.5,0)--(0.5,1.5) ;
                  \draw[->] (1,0)--(1,1.5) ;
                   \draw[densely dotted] (0.5,0.5)--(1,0.5);
                   \draw[densely dotted] (0,1)--(0.5,1);  
                   \draw(0.5,-0.5) node{$D_3$};
                   \draw(1.5,0.7) node{$-$};
         \end{scope}
         \begin{scope}[xshift=6cm]
                  \draw[->] (0,0)--(0,1.5) ;
                  \draw[->] (0.5,0)--(0.5,1.5) ;
                  \draw[->] (1,0)--(1,1.5) ;
                   \draw[densely dotted] (0,0.5)--(0.5,0.5);
                   \draw[densely dotted] (0.5,1)--(1,1);
                 \draw(0.5,-0.5) node{$D_4$};
                 \draw(1.5,0.7) node{$=0$};
         \end{scope}
            \end{tikzpicture}
    \caption{4T-relation}
    \label{4T-relation}
\end{center}
\end{figure}
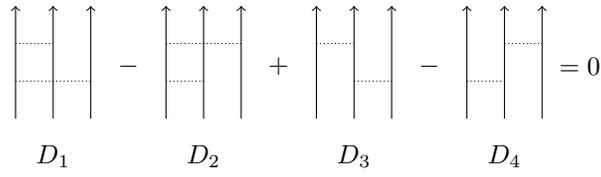

The  {\it  FI-relation} stands for the frame independent relation
where we put 
$$
D=0
$$
for any chord diagrams $D$ with an isolated chord as illustrated in Figure \ref{FI-relation}.
In more detail for those notions, consult \cite{Bar95, Ko}. 
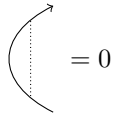
\begin{figure}[h]
  \begin{center}
       \begin{tikzpicture}
            \draw[<-](0,1) arc(135:225: 2cm and 1cm);
            \draw[densely dotted] (-0.3,-0.2)--(-0.3,0.8);
            \draw(0.5,0.3) node{$=0$};         
            \draw(0.5,1.5) node{};
            \draw(0.5,-0.8) node{};
       \end{tikzpicture}
         \caption{FI-relation}
         \label{FI-relation}
  \end{center}
\end{figure}

{\bf Proof of Theorem \ref{main theorem}:}
Let $\gamma_{\mathrm{KZ}}$ be the element in ${\mathcal A}(S^1)$ depicted as in Figure \ref{gamma}
where $S^1$ means the oriented circle.
\begin{figure}[h]
\begin{center}
\begin{tikzpicture}[xscale=1.2, yscale=1.2]
\foreach \x in {-3,-1,1,3}
\draw (\x,0)--(\x,2);
\draw[thin,fill=white] (-1.8,0.5) rectangle (3.8,1.5);
\draw (1,1) node{$\displaystyle \varPhi_{\mathrm{KZ}} \left(\frac{-1}{2\pi\sqrt{-1}}\raisebox{-1ex}{
\begin{tikzpicture}[thick]
\draw[->] (0,0)--(0,0.6);
\draw[densely dotted](0,0.3)--(0.3,0.3);
\draw[<-] (0.3,0)--(0.3,0.6);
\draw[->] (0.6,0) --(0.6,0.6);
\end{tikzpicture}
},\quad \frac{-1}{2\pi\sqrt{-1}}
\raisebox{-1ex}{
\begin{tikzpicture}[thick]
\draw[->] (-0.3,0) --(-0.3,0.6);
\draw[<-] (0,0)--(0,0.6);
\draw[densely dotted](0,0.3)--(0.3,0.3);
\draw[->] (0.3,0)--(0.3,0.6);
\end{tikzpicture}
}\right)$};
\draw[<-](-1,0) .. controls (-1,-1) and (1,-1) .. (1,0);
\draw[->](-3,0) .. controls (-3,-2.5) and (3,-2.5) .. (3,0);
\draw[<-](1,2) .. controls (1,3) and (3,3) .. (3,2);
\draw[<-](-3,2) .. controls (-3,3) and (-1,3) .. (-1,2);
\end{tikzpicture}
\caption{$\gamma_{\mathrm{KZ}}$}
\label{gamma}
\end{center}
\end{figure}

By \eqref{KZ=zeta},
its image $\bar\gamma_{\mathrm{KZ}}$ under the projection
${\mathcal A}(S^1)\twoheadrightarrow{\mathcal A}_0(S^1)$ is described below (cf. \cite{LM95} Theorem 3.1.3):
\begin{equation}\label{eq:gamma}
\bar\gamma_{\mathrm{KZ}}=\orientedcircle+
\sum_{\substack{
                {{\bf k}:\text{ admissible}}
                }}
\frac{(-1)^{{\rm wt}({\bf k})+{\rm dp}({\bf k})}}{(2\pi\sqrt{-1})^{{\rm wt}({\bf k})}}
\zeta({\bf k})D_{\bf k}.
\end{equation}
Here $D_{(k_1,\dots,k_m)}$ is the chord diagram depicted in Figure \ref{Dk}.
\begin{figure}[h]
\begin{center}
\begin{tikzpicture}[xscale=1.1]
\foreach \x in {-1.4,0,1,2}
\draw (\x,0)--(\x,4);
\draw[densely dotted] (1,0.1)--(2,0.1);

\draw[densely dotted] (0,0.2)--(1,0.2);
\draw[densely dotted] (0,0.3)--(1,0.3);
\draw[dotted] (0.5,0.2)--(0.5,0.6);
\draw[densely dotted] (0,0.5)--(1,0.5);
\draw[densely dotted] (0,0.6)--(1,0.6);
 \draw[decorate,decoration={brace}] (-0.1,0.2) -- (-0.1,0.6) node[left, midway]{$k_1-1$};

\draw[densely dotted] (1,0.7)--(2,0.7);

\draw[densely dotted] (0,0.8)--(1,0.8);
\draw[densely dotted] (0,0.9)--(1,0.9);
\draw[dotted] (0.5,0.8)--(0.5,1.2);
\draw[densely dotted] (0,1.1)--(1,1.1);
\draw[densely dotted] (0,1.2)--(1,1.2);
 \draw[decorate,decoration={brace}] (-0.1,0.8) -- (-0.1,1.2) node[left, midway]{$k_2-1$};

\draw[densely dotted] (1,1.3)--(2,1.3);

\draw[dotted] (0.5,1.7)--(0.5,3);
\draw[dotted] (1.5,1.7)--(1.5,3);

\draw[densely dotted] (1,3.4)--(2,3.4);

\draw[densely dotted] (0,3.5)--(1,3.5);
\draw[densely dotted] (0,3.6)--(1,3.6);
\draw[dotted] (0.5,3.5)--(0.5,3.9);
\draw[densely dotted] (0,3.8)--(1,3.8);
\draw[densely dotted] (0,3.9)--(1,3.9);
 \draw[decorate,decoration={brace}] (-0.1,3.5) -- (-0.1,3.9) node[left, midway]{$k_m-1$};

\draw[<-](0,0) .. controls (0,-1) and (1,-1) .. (1,0);
\draw[->](-1.4,0) .. controls (-1.4,-2.5) and (2,-2.5) .. (2,0);
\draw[<-](1,4) .. controls (1,5) and (2,5) .. (2,4);
\draw[<-](-1.4,4) .. controls (-1.4,5) and (0,5) .. (0,4);

\end{tikzpicture}
\caption{$D_{(k_1,\dots,k_m)}$}
\label{Dk}
\end{center}
\end{figure}
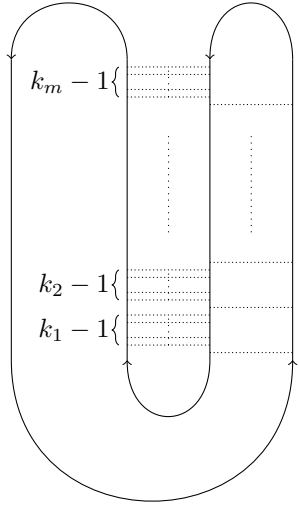

\bigskip
(A) {\it Le-Murakami relation (1995):}
They constructed a weight system (denoted $W_r$ in \cite{LM95} \S 3.2), 
 a degree preserving linear map
$$W^{95}_N:{\mathcal A}_0(S^1)\to\mathbb{C}[[h]]$$
by using the fundamental representation of the Lie algebra $\frak{sl}(N)$
for each $N\geqslant 2$.
In \cite{LM95} Lemma 3.2.1,
it was shown that  
\begin{equation}
W_N^{95}(D_{\bf k})=\frac{1-N^2}{N^{2\cdot{\rm ht}({\bf k})-1}}(-2Nh)^{{\rm wt}({\bf k})}.
\end{equation}

In \cite{LM95} Theorem 2.3.1, they showed that 
\begin{equation}\label{W=P}
N^{-2}W_N^{95}(\bar\gamma_{\mathrm{KZ}})W_N^{95}\left(\hat{Z}(K)\right)
=P_K(e^h-e^{-h},e^{-Nh})
\end{equation}
for any link $K$,
where  $\hat{Z}(K)\in {\mathcal A}_0(S^1) $ is the Kontsevich invariant
(\cite{Ko})
and $P_K(a,z)$ is the HOMFLY-PT polynomial,
the Laurent polynomial in two variables $a$ and $z$
with integer coefficients uniquely defined by the skein relation
$$
z^{-1}P_{K_+}(a,z)-zP_{K_-}(a,z)=aP_{K_0}(a,z)
$$
and the initial condition $P_{\orientedcircle}(a,z)=1$.
Here $K_+$, $K_-$ , $K_0$ are all identical except within a ball
in which they differ as illustrated below
\begin{center}
    \begin{tikzpicture}
       \draw[->,thick] (0.5,0)--(0,0.5) ;
       \draw[line width=0.2cm, white](0,0)--(0.5,0.5) ;
       \draw[->,thick] (0,0)--(0.5,0.5) ;
       \draw[dotted] (0.25,0.25) circle (0.35);
       \draw(0.3,-0.5) node{$K_{+}$};
    \end{tikzpicture}
  \qquad
    \begin{tikzpicture}(15,15)
       \draw[->,thick] (0,0)--(0.5,0.5) ;
       \draw[line width=0.2cm, white](0.5,0)--(0,0.5) ;
       \draw[->,thick] (0.5,0)--(0,0.5) ;
       \draw[dotted] (0.25,0.25) circle (0.35);
       \draw(0.3,-0.5) node{$K_{-}$};
    \end{tikzpicture}
\qquad
    \begin{tikzpicture}(15,15)
         \draw[->,thick](0,0) .. controls (0.2,0.2) and (0.2,0.3)  .. (0,0.5);
         \draw[->,thick](0.5,0) .. controls (0.3,0.2) and (0.3,0.3)  .. (0.5,0.5);
         \draw[dotted] (0.25,0.25) circle (0.35);
         \draw(0.3,-0.5) node{$K_{0}$};
    \end{tikzpicture}
    \end{center}
and $\orientedcircle$ means the trivial knot.
For the $2$-components trivial link $K=\orientedcircle\,\orientedcircle$,
we have 
\begin{equation*}\label{homfly for 2-components trivial link}
P_{\orientedcircle\,\orientedcircle}(a,z)=\frac{1}{a}\{\frac{1}{z}-z\}
\end{equation*}
by the above skein relation. 
While the left hand side of \eqref{W=P} for
$K=\orientedcircle\,\orientedcircle$ was calculated to be
$$
N^{-2}W^{95}_N(\bar\gamma_{\mathrm{KZ}})W_N^{95}(\hat{Z}(\orientedcircle\,\orientedcircle))
=N^{-2}W^{95}_N(\bar\gamma_{\mathrm{KZ}})W^{95}_N(\bar\gamma_{\mathrm{KZ}}^{-1})^2
=N^2W^{95}_N(\bar\gamma_{\mathrm{KZ}})^{-1}
$$
by $\hat{Z}(\orientedcircle)=\bar\gamma_{\mathrm{KZ}}^{-1}$ and
\cite{LM95} Proposition 2.1.3
(or it can be directly derived from \cite{LM95} Theorem 2.1.4).
Thus the following can be deduced from  \eqref{W=P}
(cf. \cite{LM95} Theorem 3.3.1)
\begin{equation}\label{eqn:WN-LM95}
W^{95}_N(\bar\gamma_{\mathrm{KZ}})=N^2\sinh h/\sinh Nh.
\end{equation}
This is how the relation was shown for 
$(2\pi\sqrt{-1},\varPhi_{\mathrm{KZ}} )$.

The following is a key lemma 
which is a direct consequence of the result  shown in \cite{LM96b}.

\begin{lem}\label{key lemma}
Let $(\mu,\varphi)$ be any associator. 
Put $\gamma_{(\mu,\varphi)}$ to be the element
\footnote{
So we have $\gamma_{\mathrm{KZ}}=\gamma_{(2\pi\sqrt{-1},\varPhi_{\mathrm{KZ}})}$.}
 in ${\mathcal A}(S^1)$
replacing 
$\varPhi_{\mathrm{KZ}}(\frac{X_0}{2\pi\sqrt{-1}},\frac{X_1}{2\pi\sqrt{-1}})$
with
$\varphi(\frac{X_0}{\mu},\frac{X_1}{\mu})$
in Figure \ref{gamma}.
Then $$\bar\gamma_{(\mu,\varphi)}=\bar\gamma_{\mathrm{KZ}}$$ holds in ${\mathcal A}_0(S^1)$.
\end{lem}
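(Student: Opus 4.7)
The plan is to recognize $\gamma_{(\mu,\varphi)}$ as the value of the combinatorial (associator-based) Kontsevich invariant $Z_{(\mu,\varphi)}$ on a particular four-strand tangle whose closure via the top and bottom arcs of Figure \ref{gamma} is the unknot, and then to invoke the invariance theorem of Le--Murakami \cite{LM96b} to conclude that its image in $\mathcal{A}_0(S^1)$ is associator-independent.

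First I would unpack Figure \ref{gamma}. Cutting the diagram along a horizontal strip containing only the four vertical strands leaves a tangle $T$, and the series displayed in the box is, after the rescaling $X_i \mapsto X_i/\mu$, precisely the chord-diagram element that the combinatorial construction of \cite{LM96b, BN, KT} assigns to the associator piece placed on the relevant strands of $T$. Gluing back the cups and caps described by the four outer arcs, the whole diagram of Figure \ref{gamma} computes $Z_{(\mu,\varphi)}(\widehat{T})$, where $\widehat{T}$ is the resulting framed oriented knot; tracing the strands and checking the orientations shows that $\widehat{T}$ is the unknot with a specific framing.

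The second step is to invoke the main result of \cite{LM96b}: for every associator $(\mu,\varphi)$, the combinatorial construction yields a well-defined invariant $Z_{(\mu,\varphi)}$ of framed oriented tangles valued in $\mathcal{A}$, and its image under the projection $\mathcal{A}(S^1) \to \mathcal{A}_0(S^1)$ coincides with the classical Kontsevich integral on $\mathcal{A}_0(S^1)$, and in particular does not depend on the choice of $(\mu,\varphi)$. Applied to $\widehat{T}$ this immediately gives
\[
\bar\gamma_{(\mu,\varphi)} \;=\; \overline{Z_{(\mu,\varphi)}(\widehat{T})} \;=\; \overline{Z_{(2\pi\sqrt{-1},\varPhi_{\mathrm{KZ}})}(\widehat{T})} \;=\; \bar\gamma
\]
in $\mathcal{A}_0(S^1)$, which is exactly the claim of the lemma.

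The main obstacle is the bookkeeping in the first step: one must match the rescaling by $1/\mu$, the orientations on the four strands, and the nesting of cups and caps in Figure \ref{gamma} with a legitimate tangle decomposition in the Le--Murakami sense, so that the element pictured is literally $Z_{(\mu,\varphi)}$ on a tangle diagram closing to the unknot. Once this identification is in place, the conclusion is an immediate consequence of the associator-independence of the combinatorial Kontsevich invariant in $\mathcal{A}_0$, which is why the author calls the lemma a ``direct consequence of the result shown in \cite{LM96b}''.
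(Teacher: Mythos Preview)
Your proposal is correct and follows essentially the same route as the paper: identify $\gamma_{(\mu,\varphi)}$ as the combinatorial Kontsevich (pre-)invariant of the specific unknot diagram obtained by closing the four-strand tangle with the indicated cups and caps, and then invoke the associator-independence theorem of \cite{LM96b}. The paper packages this slightly differently---it first normalizes to $\mu=1$, names the diagram the ``U-knot'' $U$, uses the twistor formulation $Z_f^F(U)$ of \cite{LM96b}, and in fact obtains the stronger equality $\gamma_{(\mu,\varphi)}=\gamma$ already in $\mathcal{A}(S^1)$ (not just after passing to $\mathcal{A}_0(S^1)$)---but the underlying argument is the same one you outline.
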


\begin{proof}
We may assume $\mu=1$ because
$\gamma_{(\mu,\varphi)}=\gamma_{(1,\varphi({X_0}/{\mu},{X_1}/{\mu}))}$ by definition.
As is explained in \cite{LM96b},
our $\gamma_{(1,\varphi)}$  is nothing but the framed link {\it pre-}invariant
\footnote{
It is not an invariant of framed links but it will be so after one multiply it  with a suitable powers of $\gamma_{(1,\varphi)}$ (cf. \cite{LM96b} Thoerem 2.1)
}
$Z^F_f(U)$ 
of the so-called U-knot $U$ depicted in Figure \ref{U-knot}.
\begin{figure}[h]
\begin{center}
\begin{tikzpicture}[>=stealth,thick,xscale=0.3,yscale=0.6]
\foreach \x in {-3,-1,1,3}
\draw (\x,0)--(\x,1);
\draw[<-](-1,0) .. controls (-1,-1) and (1,-1) .. (1,0);
\draw[->](-3,0) .. controls (-3,-2.5) and (3,-2.5) .. (3,0);
\draw[<-](1,1) .. controls (1,2) and (3,2) .. (3,1);
\draw[<-](-3,1) .. controls (-3,2) and (-1,2) .. (-1,1);
\end{tikzpicture}
\caption{$U$}
\label{U-knot}
\end{center}
\end{figure}
Here $F$ is a \lq twistor' from 
$\varPhi_{\mathrm{KZ}}(\frac{X_0}{2\pi\sqrt{-1}},\frac{X_1}{2\pi\sqrt{-1}})$ 
to $\varphi$, that is,
a certain element $F$ in  ${\mathcal A}(\uparrow\uparrow)$ 
such that 
$$
\varphi=F^{2,3}\cdot F^{1,23}\cdot\varPhi_{\mathrm{KZ}}(\frac{X_0}{2\pi\sqrt{-1}},\frac{X_1}{2\pi\sqrt{-1}})
\cdot (F^{12,3})^{-1}\cdot (F^{1,2})^{-1}
$$
holds in ${\mathcal A}(\uparrow\uparrow\uparrow)$ (for precise, see \cite{LM96b} \S 7).
According to  \cite{LM96b} Theorem 8 and its corollary,
$Z^F_f(K)\in{\mathcal A}(S^1)$ for each framed link $K$
is independent of any choice of a twistor $F$,
namely, any choice of an associator $(1,\varphi)$.
\footnote{
Hence hereafter we will denote it simply by $Z_f(K)$.}
Thus
$$\gamma_{(\mu,\varphi)}=\gamma_{\mathrm{KZ}}$$
in ${\mathcal A}(S^1)$,
from which we obtain the claim.
\end{proof}

By Lemma \ref{key lemma}, we may replace $\bar\gamma_{\mathrm{KZ}}$ with $\bar\gamma_{(\mu,\varphi)}$
in \eqref{eqn:WN-LM95}. 
Therefore Le-Murakami relation (1995)  holds for any associator $(\mu,\varphi)$
because the equality \eqref{eq:gamma} replacing
$\zeta(k_1,\dots,k_m)$ with $\zeta_{\varphi}(k_1,\dots,k_m)$ and
$2\pi\sqrt{-1}$ with $\mu$ holds.
\qed
\medskip

(B) {\it Le-Murakami relation (1996):}
They constructed  a weight system (denoted $W$ in \cite{LM96} \S 3.2),
a linear map
$$W^{96}_N:{\mathcal A}(S^1)\to\mathbb{C}[[h]]$$
by using the fundamental representation of  the Lie algebra $\frak{so}(N)$
 for each $N\geqslant 3$.
In \cite{LM96} Section 3, they introduced an invariant 
of {\it un-}oriented framed links 
which is given by
\begin{equation}\label{kappa=K}
\kappa(L):=\frac{N^{s_L-2}W^{96}_N(Z_f(L))}{W^{96}_N(\gamma_{\mathrm{KZ}})^{s_L-1}}
\in\mathbb{C} [[h]]
\end{equation}
for framed link diagrams $L$ with $s_L$ maximal points.
They showed that 
that it satisfies  the followings:
\begin{equation*}
\kappa(L_r)=e^{(N-1)h}\kappa(L), \qquad
\kappa(L_l)=e^{-(N-1)h}\kappa(L),
\end{equation*}
\begin{equation*}
\kappa(L_+)-\kappa(L_-)=\{\exp(h)-\exp(-h)\}\cdot\{\kappa(L_0)-\kappa(L_\infty)\},
\end{equation*}
$$
\kappa(\unorientedcircle)=1.
$$
Here  $L_r$ (resp. $L_l$) is the same framed link diagram to $L$ 
with a right-handed (resp. left-handed) curl added (using a type I Reidemeister move) 
and $L_+$, $L_-$ , $L_0$, $L_\infty$ are all identical except within a ball
in which they differ as illustrated below
\begin{center}
    \begin{tikzpicture}(15,15)
       \draw[-,thick] (0.5,0)--(0,0.5) ;
       \draw[line width=0.2cm, white](0,0)--(0.5,0.5) ;
       \draw[-,thick] (0,0)--(0.5,0.5) ;
       \draw[dotted] (0.25,0.25) circle (0.35); 
       \draw(0.3,-0.5) node{$L_{+}$};
    \end{tikzpicture}
  \qquad
    \begin{tikzpicture}(15,15)
       \draw[-,thick] (0,0)--(0.5,0.5) ;
       \draw[line width=0.2cm, white](0.5,0)--(0,0.5) ;
       \draw[-,thick] (0.5,0)--(0,0.5) ;
       \draw[dotted] (0.25,0.25) circle (0.35);
       \draw(0.3,-0.5) node{$L_{-}$};
    \end{tikzpicture}
  \qquad
    \begin{tikzpicture}(15,15)
         \draw[-,thick](0,0) .. controls (0.2,0.2) and (0.2,0.3)  .. (0,0.5);
         \draw[-,thick](0.5,0) .. controls (0.3,0.2) and (0.3,0.3)  .. (0.5,0.5);
         \draw[dotted] (0.25,0.25) circle (0.35);
          \draw(0.3,-0.5) node{$L_{0}$};
    \end{tikzpicture}
  \qquad
    \begin{tikzpicture}(15,15)
        \draw[-,thick](0,0) .. controls (0.2,0.2) and (0.3,0.2)  .. (0.5,0);
         \draw[-,thick](0,0.5) .. controls (0.2,0.3) and (0.3,0.3)  .. (0.5,0.5);
         \draw[dotted] (0.25,0.25) circle (0.35);
          \draw(0.3,-0.5) node{$L_{\infty}$};
    \end{tikzpicture}
\end{center}
and $\unorientedcircle$ means the trivial unoriented knot.
For the $2$-components trivial unoriented  link $L=\unorientedcircle\,\unorientedcircle$,
the above equations yield
$$
\kappa(\unorientedcircle\,\unorientedcircle)=\frac{e^{(N-1)h}-e^{-(N-1)h}}{e^h-e^{-h}}+1
$$
While the left hand side of \eqref{kappa=K} for
$L=\unorientedcircle\,\unorientedcircle$ was calculated to be
$$
\kappa(\unorientedcircle\,\unorientedcircle)=
\frac{W^{96}_N(\unorientedcircle\,\unorientedcircle)}{W^{96}_N(\gamma_{\mathrm{KZ}})}
=\frac{N^2}{W^{96}_N(\gamma_{\mathrm{KZ}})}
$$
by 
\cite{LM96} Section 3.2.
Thus the following  is obtained 
\begin{equation}\label{eqn:WN-LM96}
W^{96}_N(\gamma_{\mathrm{KZ}})=\frac{N^2\sinh h}{\sinh(N-1)h+\sinh h}.
\end{equation}
In \cite{LM96} Section 4,
they constructed an algebra homomorphism
$$\psi:{\mathcal A}_0(S^1)\to{\mathcal A}(S^1)$$
such that 
\begin{equation}
\psi(\bar\gamma_\mathrm{KZ})=\gamma_\mathrm{KZ}.
\end{equation}
and in \cite{LM96} Proposition 4.5 showed that  
\begin{equation}
W^{96}_N\circ \psi(D_{\bf k})=(-2)^{{\rm wt}({\bf k})}Ng({\bf k})h^{{\rm wt}({\bf k})}.
\end{equation}
The above equations with \eqref{eq:gamma} immediately yields
the Le-Murakami relation (1996) for 
$(2\pi\sqrt{-1},\varPhi_{\mathrm{KZ}} )$.

Again by Lemma \ref{key lemma}, we may replace $\bar\gamma_{\mathrm{KZ}}$ with $\bar\gamma_{(\mu,\varphi)}$
in \eqref{eqn:WN-LM96}. 
Thus the validity of the relation for any associator $(\mu,\varphi)$ follows.
\qed
\medskip

(C) {\it Takamuki relation:}
Let $\varphi$ be any invertible series in 
$\mathbb C\langle\langle X_0,X_1\rangle\rangle$.
Consider the chord diagram $\delta_\varphi\in{\mathcal A}(S^1)$
defined as in Figure \ref{delta}.
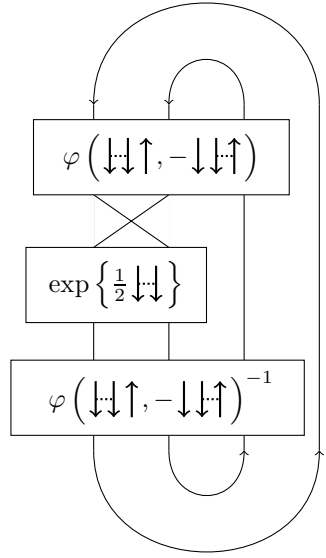
\begin{figure}[h]
\begin{center}
\begin{tikzpicture}[scale=1]
\foreach \x in {0,1,2,3}
\draw (\x,0.3)--(\x,4.9);

\draw[thin,fill=white] (-1.1,0.5) rectangle (2.8,1.5);
\draw (0.9,1) node{$\displaystyle \varphi \left(\raisebox{-1ex}{
\begin{tikzpicture}[thick,scale=0.8]
\draw[<-] (0,0)--(0,0.6);
\draw[densely dotted](0,0.3)--(0.3,0.3);
\draw[<-] (0.3,0)--(0.3,0.6);
\draw[->] (0.6,0) --(0.6,0.6);
\end{tikzpicture}
},-
\raisebox{-1ex}{
\begin{tikzpicture}[thick,scale=0.8]
\draw[<-] (-0.3,0) --(-0.3,0.6);
\draw[<-] (0,0)--(0,0.6);
\draw[densely dotted](0,0.3)--(0.3,0.3);
\draw[->] (0.3,0)--(0.3,0.6);
\end{tikzpicture}
}\right)^{-1}$};

\draw[thick,color=white] (0,3)--(0,3.7) (1,3)--(1,3.7) ;

\draw[thin,fill=white] (-0.9,2) rectangle (1.5,3);
\draw (0.3,2.5) node{$\exp\left\{\frac{1}{2}
\raisebox{-1ex}{
\begin{tikzpicture}[thick,scale=0.8]
\draw[<-] (0,0)--(0,0.6);
\draw[densely dotted](0,0.3)--(0.3,0.3);
\draw[<-] (0.3,0)--(0.3,0.6);
\end{tikzpicture}
}
\right\}$};

\draw (0,3.0)--(1,3.7) (1,3.0)--(0,3.7);

\draw[thin,fill=white] (-0.8,3.7) rectangle (2.6,4.7);
\draw (0.9,4.2) node{$\displaystyle \varphi \left(\raisebox{-1ex}{
\begin{tikzpicture}[thick,scale=0.8]
\draw[<-] (0,0)--(0,0.6);
\draw[densely dotted](0,0.3)--(0.3,0.3);
\draw[<-] (0.3,0)--(0.3,0.6);
\draw[->] (0.6,0) --(0.6,0.6);
\end{tikzpicture}
},-
\raisebox{-1ex}{
\begin{tikzpicture}[thick,scale=0.8]
\draw[<-] (-0.3,0) --(-0.3,0.6);
\draw[<-] (0,0)--(0,0.6);
\draw[densely dotted](0,0.3)--(0.3,0.3);
\draw[->] (0.3,0)--(0.3,0.6);
\end{tikzpicture}
}\right)$};

\draw[->](1,0.3) .. controls (1,-0.5) and (2,-0.5) .. (2,0.3);
\draw[->](0,0.3) .. controls (0,-1.5) and (3,-1.5) .. (3,0.3);

\draw[<-](1,4.9) .. controls (1,5.7) and (2,5.7) .. (2,4.9);
\draw[<-](0,4.9) .. controls (0,6.7) and (3,6.7) .. (3,4.9);
\end{tikzpicture}
\caption{$\delta_\varphi$}
\label{delta}
\end{center}
\end{figure}

Assume that $\varphi$  satisfies \eqref{shuffle}
and \eqref{2-cycle}.
Then by Lemma \ref{Le-Murakami method},
\begin{align}\label{eqn:1}
 \varphi= 1+\sum_{k=1}^\infty
 \sum_{{\bf p},{\bf q},{\bf r},{\bf s}\in{\mathbb Z}_{\geqslant 0}^k}&
(-1)^{{\rm wt}({\bf p})+{\rm wt}({\bf s})}
\zeta_\varphi\bigl(\tau({\bf p}+{\bf r},{\bf q}+{\bf s})\bigr)
\binom{{\bf p}+{\bf r}}{{\bf r}}\binom{{\bf q}+{\bf s}}{{\bf s}}\\ \notag
& \cdot X_1^{{\rm wt}({\bf r})}
X_0^{q_k}X_1^{p_k}\cdots X_0^{q_1}X_1^{p_1}
X_0^{{\rm wt}({\bf s})} .
\end{align}
By Lemma \ref{lem:duality},
\begin{align}\label{eqn:2}
 \varphi= 1+\sum_{k=1}^\infty
 \sum_{{\bf p},{\bf q},{\bf r},{\bf s}\in{\mathbb Z}_{\geqslant 0}^k}&
(-1)^{{\rm wt}({\bf q})+{\rm wt}({\bf r})}
\zeta_\varphi\bigl(\tau({\bf p}+{\bf r},{\bf q}+{\bf s})\bigr)
\binom{{\bf p}+{\bf r}}{{\bf r}}\binom{{\bf q}+{\bf s}}{{\bf s}}\\ \notag
& \cdot X_1^{{\rm wt}({\bf s})}
X_0^{p_1}X_1^{q_1}\cdots X_0^{p_k}X_1^{q_k}
X_0^{{\rm wt}({\bf r})} .
\end{align}
By Lemma \ref{lem:duality} and \eqref{2-cycle},
\begin{align*}
 \varphi^{-1}= 1+\sum_{k=1}^\infty
 \sum_{{\bf p},{\bf q},{\bf r},{\bf s}\in{\mathbb Z}_{\geqslant 0}^k}&
 (-1)^{{\rm wt}({\bf q})+{\rm wt}({\bf r})}
 \zeta_\varphi\bigl(\tau({\bf p}+{\bf r},{\bf q}+{\bf s})\bigr)
\binom{{\bf p}+{\bf r}}{{\bf r}}\binom{{\bf q}+{\bf s}}{{\bf s}}\\
& \cdot X_0^{{\rm wt}({\bf s})}
X_1^{p_1}X_0^{q_1}\cdots X_1^{p_k}X_0^{q_k}
X_1^{{\rm wt}({\bf r})} .
\end{align*}
They are equal to the formulae in \cite{T} \S 5
when $\varphi(A,B)=\varPhi_{\mathrm{KZ}}(\frac{A}{2\pi\sqrt{-1}},\frac{B}{2\pi \sqrt{-1}})$.

By the equations \eqref{eqn:1} and \eqref{eqn:2},
the following can be deduced
(cf. \cite{T} Proof of Proposition 5 and 6)
\begin{equation}\label{eqn:WN-T}
W_N^{95}(\bar\delta_\varphi)=\frac{e^{-Nh}}{2}
\left\{
(e^h+e^{-h})
R_\varphi+N^2(e^h-e^{-h})
\right\}
\end{equation}
where $R_\varphi$ means the right hand side of Takamuki relation
replacing $\zeta({\bf k})$ with $\zeta_{\varphi}({\bf k})$
and $2\pi\sqrt{-1}$ with $1$.

\begin{lem}\label{}
Let $(\mu,\varphi)$ be any associator with $\mu=1$. Then
$$\bar\delta_\varphi=\bar\gamma_{(\mu,\varphi)}$$ in ${\mathcal A}_0(S^1)$.
\end{lem}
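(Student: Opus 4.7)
The plan is to recognize $\delta_\varphi$ as the combinatorial framed Kontsevich--Le-Murakami invariant $Z^F_f(K_\delta)$ of the explicit framed knot $K_\delta$ decoded from Figure \ref{delta}, evaluated via the associator $(1,\varphi)$. Here $F$ is the twistor relating $\varPhi_{\mathrm{KZ}}$ to $\varphi$ already used in the proof of Lemma \ref{key lemma}. In that decoding, the three chord-diagram boxes in the figure record the elementary local contributions in the combinatorial construction of \cite{LM96b}: the two $\varphi^{\pm 1}$ factors encode the reassociations between adjacent triples of strands, and $\exp\{t_{12}/2\}$ encodes the Kontsevich invariant of the single drawn crossing.

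First I would trace Figure \ref{delta}: starting at any point and following the orientations around once shows that $K_\delta$ is a single-component framed knot whose underlying diagram has exactly the one crossing drawn in the middle, with the $x=3$ strand and the $x=0$--$x=2$ triple of strands joined by the top and bottom closing arcs. Consequently a single Reidemeister~I move identifies the underlying unframed knot of $K_\delta$ with the unknot, carrying some non-zero framing.

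Next I would invoke \cite{LM96b} Theorem~8 and its corollary, which guarantees that $Z^F_f$ is independent of the associator used and depends only on the framed isotopy class of the knot. Applied to $K_\delta$ and to the $U$-knot of Lemma \ref{key lemma}, this gives that $\delta_\varphi = Z^F_f(K_\delta)$ and $\gamma_{(1,\varphi)} = Z^F_f(U)$ agree up to a framing correction, which, by general theory of the framed Kontsevich invariant, is an exponential in an isolated chord on $S^1$. Projecting to $\mathcal{A}_0(S^1)$ via the quotient by the FI-relation, this isolated-chord correction vanishes, yielding $\bar\delta_\varphi = \bar\gamma_{(1,\varphi)}$.

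The main obstacle is the careful bookkeeping required to verify that Figure \ref{delta} does encode $Z^F_f(K_\delta)$ in the precise conventions of \cite{LM96b} --- in particular, matching the signs of the $\varphi^{\pm 1}$ boxes and of $\exp\{t_{12}/2\}$ with the orientations of the strands they act on, and determining the framing of $K_\delta$ precisely enough that the correction is unambiguously an exponential of an isolated chord. Once these conventions are fixed, the two previous steps are essentially immediate from the cited invariance statement and the definition of $\mathcal{A}_0(S^1)$.
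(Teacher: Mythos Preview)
Your strategy coincides with the paper's: identify $\bar\delta_\varphi$ as the combinatorial Kontsevich (pre-)invariant of an explicit unknot diagram and then compare it with $\bar\gamma$. The paper names that diagram $K_0$ (Figure~\ref{X-unknot}), observes that it carries two maxima versus the single maximum of the plain circle $\orientedcircle$, and uses the maxima normalization of the pre-invariant to obtain $\bar\delta_\varphi\cdot\bar\gamma^{-2}=\bar\gamma^{-1}$, hence $\bar\delta_\varphi=\bar\gamma$.

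Your variant---comparing instead to the $U$-knot of Lemma~\ref{key lemma} and absorbing the framing discrepancy into the FI relation---is workable, but one step is misstated. You assert that \cite{LM96b} Theorem~8 and its corollary make $Z^F_f$ depend only on the framed isotopy class. Those results supply only associator-independence; $Z^F_f$ is explicitly called a \emph{pre}-invariant both in the proof of Lemma~\ref{key lemma} and in the paper's own proof of the present lemma, and it genuinely depends on the number of maxima in the chosen diagram. Your argument is rescued by the (unstated) fact that your $K_\delta$ and $U$ both carry two maxima, so the maxima contribution cancels and only the framing exponential---an isolated-chord factor---distinguishes their pre-invariants; FI then kills it. You should make this count explicit rather than claim a framed-isotopy invariance that $Z^F_f$ does not have.
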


\begin{proof}
Again as is explained in \cite{T},
the quotient $\bar\delta_\varphi\in{\mathcal A}_0(S^1)$ is nothing but 
Kontsevich's (\cite{Ko})  knot (pre-)invariant 
$Z^F(K_0)$ in \cite{LM96b}
of the unknot $K_0$ represented in Figure \ref{X-unknot}
with a twistor $F$ from $\varPhi_{\mathrm{KZ}}(\frac{X_0}{2\pi\sqrt{-1}},\frac{X_1}{2\pi\sqrt{-1}})$
to $\varphi$.
\begin{figure}[h]
\begin{center}
\begin{tikzpicture}[>=stealth,thick, rotate=90, xscale=0.6, yscale=0.4]
\draw[<-](-0.5,0) .. controls (-0.5,1) and (1.5,2) .. (1.8,0);
\draw[line width=0.2cm, white](-1.8,0) .. controls (-1.5,2) and (0.5,1) .. (0.5,0) .. controls (0.5,-1) and (-0.5,-1) .. (-0.5,0);
\draw[<-](-1.8,0) .. controls (-1.5,2) and (0.5,1) .. (0.5,0) .. controls (0.5,-1) and (-0.5,-1) .. (-0.5,0);
\draw[<-](1.8,0) .. controls (1.5,-3) and (-1.5, -3) .. (-1.8,0);
\end{tikzpicture}
\caption{$K_0$}
\label{X-unknot}
\end{center}
\end{figure}
Because $K_0$ has two maximums and the unknot diagram $\orientedcircle$ has a maximum
with $Z^F(\orientedcircle)=\bar\gamma_{(\mu,\varphi)}$
and they give equivalent knots,
we have $\bar\delta_\varphi\cdot (\bar\gamma_{(\mu,\varphi)})^{-2}=(\bar\gamma_{(\mu,\varphi)})^{-1}$.
\end{proof}

Therefore
by combining  \eqref{eqn:WN-T} with \eqref{eqn:WN-LM95},
we obtain Takamuki relation for any associator $(1,\varphi)$
by the above lemma.
Hence the relation also holds for any associator $(\mu,\varphi)$
because $(1,\varphi(X_0/\mu,X_1/\mu))$ forms an associator.
\qed
\medskip

(D) {\it  Ihara-Takamuki relation:}
They constructed  a normalized weight system (denoted $\widehat{W_0}$ in \cite{IT} \S 3),
a linear map
$$W_0^{\mathrm{IT}}:{\mathcal A}_0(S^1)\to\mathbb{C}[[h]]$$
by using the $7$-dimensional irreducible standard representation $\Gamma_{1,0}$ of the exceptional 
simple Lie algebra of type $G_2$.
In  \cite{IT} Proposition 4, they showed 
\begin{equation}
W_0^{\mathrm{IT}}(D_{\tau({\bf p},{\bf q})})=w({\bf p},{\bf q})h^{{\rm wt}({\bf p})+{\rm wt}({\bf q})}
\end{equation}
and
\begin{equation}
w({\bf p},{\bf q})=w({\bf q^*},{\bf p^*}).
\end{equation} 

From \cite{IT} Proposition 2, 3 and 5
where they related $W_0^{\mathrm{IT}}(\bar\gamma_{\mathrm{KZ}})$
with Kuperberg's computation \cite{Ku} on
the quantum dimension of the associated quantum representation,
it follows 
\begin{equation}\label{eqn:WN-IT}
W_0^{\mathrm{IT}}(\bar\gamma_{\mathrm{KZ}})=\frac{7^2\cdot [6]_q[4]_q}{[12]_q[7]_q[2]_q}.
\end{equation}
Then by combining the above three relation with 
the duality relation (cf. Lemma \ref{lem:duality}) among MZV's
\begin{equation}
\zeta\bigl(\tau({\bf p},{\bf q})\bigr)=
\zeta\bigl(\tau({\bf q^*},{\bf p^*})\bigr),
\end{equation} 
we obtain Ihara-Takamuki relation for
$(2\pi\sqrt{-1},\varPhi_{\mathrm \mathrm{KZ}} )$
since \eqref{eq:gamma} can be reformulated as
$$
\bar\gamma_{\mathrm{KZ}}=\orientedcircle+\sum_{\substack{{\bf p},{\bf q} \\  {\rm dp}({\bf p})={\rm dp}({\bf q})}}
\frac{(-1)^{{\rm wt}({\bf q})}}{(2\pi\sqrt{-1})^{{\rm wt}({\bf p})+{\rm wt}({\bf q})}}
\zeta\bigl(\tau({\bf p},{\bf q})\bigr)
D_{\tau({\bf p},{\bf q})}.
$$

Suppose that $(\mu,\varphi)$ be any associator.
Again by Lemma \ref{key lemma}, we may replace $\bar\gamma_{\mathrm{KZ}}$ with $\bar\gamma_{(\mu,\varphi)}$
in \eqref{eqn:WN-IT}. 
The validity of the duality relation for $(\mu,\varphi)$ is verified
in Lemma \ref{duality},
hence from which we obtain Ihara-Takamuki relation for $(\mu,\varphi)$.
\qed

\begin{rem}
There is another proof of the derivation of 
Le-Murakami relation (A) from the associator relation:
The relation (A)
is generalized to Ohno-Zagier relation (\cite{OZ}),
but which was shown in \cite{Li} to be a consequence of
the regularized double shuffle relation.
Whereas in \cite{F11} it was shown that the last relation 
can be derived from the associator relation.
\end{rem}

Though we do not have a precise definition of relations among MZV's in knot theory,
the author expects that all  such relations should be derived from
the associator relations
i.e. the pentagon equation and the shuffle relation.

\thanks{
{\it Acknowledgements}.
This work was supported by 
Grant-in-Aid for Young Scientists (A) 24684001,
JSPS KAKENHI JP15KK0159
and Daiko Foundation.
The author would like to thank Hisatoshi Kodani for helping him to
draw several pictures in the paper.
}



\begin{thebibliography}{Utah}

\bibitem[AT]{AT}
Alekseev, A. and Torossian, C.,
{\it The Kashiwara-Vergne conjecture and Drinfeld's associators},
Ann. of Math. (2) {\bf 175} (2012), no. 2, 415--463. 

\bibitem[An]{A}
Andr\'{e}, Y.,
{\it Une introduction aux motifs (motifs purs, motifs mixtes, p\'{e}riodes)},
Panoramas et Synth\`{e}ses, {\bf 17},
Soci\'{e}t\'{e} Math\'{e}matique de France, Paris, 2004.

\bibitem[B1]{Bar95}
Bar-Natan, D.,
{\it On the Vassiliev knot invariants},
Topology {\bf 34} (1995), no. 2, 423--472.

\bibitem[B2]{BN}
Bar-Natan, D.,
{\it Non-associative tangles},
Geometric topology (Athens, GA, 1993), 139--183, 
AMAPIP Stud. Adv. Math., 2.1, Amer. Math. Soc., Providence, RI, 1997.

\bibitem[BD]{BD}
Bar-Natan, D. and Dancso, Z.,
{\it Pentagon and hexagon equations following Furusho},
Proc. Amer. Math. Soc. {\bf 140} (2012), no. 4, 1243--1250. 

\bibitem[Br]{Br}
Brown, F.,
 {\it Mixed Tate Motives over $\mathrm{Spec}(\mathbb{Z})$}, 
 Ann. of Math., (2) {\bf 175}, (2012), 949--976.
 
\bibitem[DG]{DG} 
Deligne, P. and  Goncharov, A., 
 {\it Groupes fondamentaux motiviques de Tate mixte}, 
 Ann. Sci. \'{E}cole Norm. Sup. (4) {\bf 38} (2005), no. 1, 1--56.

\bibitem[Dr]{Dr}
Drinfel'd, V. G.,
{\it On quasitriangular quasi-Hopf algebras and
a group closely connected with ${\rm Gal}(\overline{\mathbb Q}/{\mathbb Q})$},
Leningrad Math. J. {\bf 2} (1991), no. 4, 829--860.

\bibitem[E]{E}
Euler, L.,
{\it Meditationes circa singulare serierum genus}, 
Novi Commentarii academiae scientiarum Petropolitanae {\bf 20}, (1776), 140--186
and Opera Omnia: Series 1, Volume  {\bf 15}, 217--267
(also available from {\tt http://eulerarchive.maa.org/}).


\bibitem[F1]{F03}
Furusho, H.,
{\it The multiple zeta value algebra and the stable derivation algebra},
Publ. Res. Inst. Math. Sci. Vol {\bf 39}. no 4. (2003). 695--720.

\bibitem[F2]{F04} 
Furusho, H.,
{\it $p$-adic multiple zeta values I -- $p$-adic multiple polylogarithms and the $p$-adic KZ equation},
Inv. Math. Vol {\bf 155}, Number 2, 253--286(2004).

\bibitem[F3]{F10}
Furusho, H.,
{\it Pentagon and hexagon equations},
Ann. of Math. (2)  Vol. {\bf 171} (2010), no. 1, 545--556.

\bibitem[F4]{F11}
Furusho, H.,
 {\it Double shuffle relation for associators},
  Ann. of Math. (2) {\bf 174} (2011), no. 1, 341--360. 

\bibitem[F5]{F14}
Furusho, H.,
 {\it Around associators}, 
 Automorphic forms and Galois representations, 2, London Math.Soc. Lecture Note Ser. {\bf 415} (2014), 105--117, Cambridge Univ. Press, Cambridge,\\
which is a revised version of  `{\it Four groups related to associators'}, the report on Mathematische Arbeitstagung. 2011.

\bibitem[G]{G}
Grothendieck, A.,
{\it Esquisse d'un programme},
1983, available on pp. 243--283. London Math. Soc. LNS {\bf 242},  
Geometric Galois actions, 1,  5--48, Cambridge Univ.

\bibitem[IT]{IT}
Ihara, K. and  Takamuki, T.,
{\it The quantum $\frak g_2$ invariant and relations of multiple zeta values},
J. Knot Theory Ramifications {\bf 10} (2001), no. 7, 983--997.

\bibitem[JS]{JS}
Joyal, A. and Street, R.,
{\it Braided tensor categories},
Adv. Math. {\bf 102}  (1993),  no. 1, 20--78. 

\bibitem[KT]{KT}
Kassel, C. and Turaev, V.,
{\it Chord diagram invariants of tangles and graphs},
Duke Math. J. {\bf 92} (1998), no. 3, 497--552.

\bibitem[Ko]{Ko}
Kontsevich, M.,
{\it Vassiliev's knot invariants},
I. M. Gel'fand Seminar, 137--150, Adv. Soviet Math. {\bf 16}, Part 2, Amer. Math. Soc., Providence, RI, 1993.

\bibitem[Ku]{Ku}
Kuperberg, G., 
{\it The quantum $G_2$ link invariant}, 
Internat. J. Math. {\bf 5} (1994), no. 1, 61--85.

\bibitem[LM1]{LM95}
Le, T. T. Q. and Murakami,  J.,
{\it Kontsevich's integral for the Homfly polynomial and relations between values of multiple zeta functions},
Topology Appl. {\bf 62} (1995), no. 2, 193--206.

\bibitem[LM2]{LM96}
Le, T. T. Q. and Murakami,  J.,
{\it Kontsevich's integral for the Kauffman polynomial},
Nagoya Math. J. {\bf 142} (1996), 39--65.

\bibitem[LM3]{LM96b}
Le, T. T. Q. and Murakami,  J.,
{\it The universal Vassiliev-Kontsevich invariant for framed oriented links},
Compositio Math. {\bf 102} (1996), no. 1, 41--64.

\bibitem[Li]{Li}
Li, Z., 
{\it Regularized double shuffle and Ohno-Zagier relations of multiple zeta values}, 
J. Number Theory {\bf 133} (2013), no. 2, 596--610. 

\bibitem[OZ]{OZ}
Ohno, Y. and Zagier, D.,
{\it Multiple zeta values of fixed weight, depth, and height},
Indag. Math. (N.S.) {\bf 12} (4) (2001) 483--487.

\bibitem[T]{T}
Takamuki, T.,
{\it The Kontsevich invariant and relations of multiple zeta values}, 
Kobe J. Math. {\bf 16} (1999), no. 1, 27--43.

\bibitem[W]{W} 
Willwacher,  T.,
 {\it M. Kontsevich's graph complex and the Grothendieck-Teichm\"{u}ller Lie algebra}, 
  Invent. Math. {\bf 200} (2015), no. 3, 671--760.

\end{thebibliography}
\end{document}